\documentclass[12pt]{amsart}
\usepackage{amscd,amssymb, latexsym}
\usepackage{graphicx}
\usepackage{epstopdf}
\usepackage{amsmath,amsthm,amstext, amsbsy, amssymb}
%\usepackage[left=3cm,right=3cm,
%    top=3cm,bottom=3cm,bindingoffset=0cm]{geometry}
\usepackage{mathrsfs}

\DeclareMathOperator{\Hol}{Hol}

\DeclareMathOperator{\card}{card}

\renewcommand{\phi}{\varphi}

\linespread{1.1}

\newtheorem{Thm}{Theorem}[section]
\newtheorem{theorem}[Thm]{Theorem}
\newtheorem{lemma}[Thm]{Lemma}

\makeatletter
\@addtoreset{equation}{section}
\makeatother

\begin{document}

\sloppy
\title[Densities of Gabor Gaussian Systems]
{Upper and lower densities of Gabor Gaussian Systems}

\author{Yurii Belov, Alexander Borichev, Alexander Kuznetsov}
\address{
Yurii Belov:
\newline Department of Mathematics and Computer Science, St.~Petersburg State University, St. Petersburg, Russia
\newline {\tt j\_b\_juri\_belov@mail.ru}
\smallskip
\newline \phantom{x}\,\, Alexander Borichev:
\newline Institut de Math\'ematiques de Marseille,
Aix Marseille Universit\'e, CNRS, Centrale Marseille, I2M, Marseille, France
\newline {\tt alexander.borichev@math.cnrs.fr}
\smallskip
\newline \phantom{x}\,\, Alexander Kuznetsov:
\newline Department of Mathematics and Computer Science, St.~Petersburg State University, St.~Petersburg, Russia
\newline {\tt alkuzn1998@gmail.com}
\smallskip
}
\thanks{The work was supported by the Russian Science Foundation grant 19-11-00058.}

\begin{abstract} 
We study the upper and the lower densities of complete and minimal Gabor Gaussians systems. In contrast to the classical lattice case when
they are both equal to $1$, we prove that the lower density may reach $0$ while the upper density may vary at least from $\frac{1}{\pi}$ to $e$. 
In the case when the upper density exceeds $1$, we establish a sharp inequality relating the upper and the lower densities. 
\end{abstract}

\maketitle

\section{Introduction}

Given a function $f$ in $L^2(\mathbb{R})$ and two real numbers $t,\omega$, we consider its time-frequency shift

\begin{equation*}
\rho_{t, \omega }f(x)= e^{2i\pi \omega  x} f(x-t).
\end{equation*}

Given a set $\Lambda$ of points in $\mathbb{R}^2$ (which we identify with $\mathbb C$) we define the corresponding Gabor system $\{\rho_{t,w}f: (t,w)\in\Lambda\}$.
In this note we are interested in the Gaussian Gabor systems with $f=\varphi$, $\varphi(x)=2^{1\slash4}e^{-\pi x^2}$.
Denote
\begin{equation*}
\mathcal{G}_\Lambda  = \{ \rho_{t, \omega }\varphi : (t,\omega ) \in \Lambda \}.
\end{equation*}

It is well-known that if $\Lambda$ is a separated sequence in $\mathbb{R}^2$, then the system $\mathcal{G}_\Lambda$ is a frame in $L^2(\mathbb{R})$ if and only if its Beurling--Landau density exceeds $1$ \cite{L,S,SW}. If we just require that $\mathcal{G}_\Lambda$ is a complete and minimal system in $L^2(\mathbb{R})$, the situation changes dramatically. In 2009 Ascenzi, Lyubarskii,
and Seip \cite{ALS} proved that if $\Lambda=\{(-1,0),(1,0), (0,\pm\sqrt{2n}), (\pm\sqrt{2n},0): n\ge 1\}$, then the system $\mathcal{G}_\Lambda$ is complete and minimal. The density of such $\Lambda$ is $\dfrac{2}{\pi}<1$. Furthermore, they proved that if $\mathcal{G}_\Lambda$
is complete and minimal and if $\Lambda$ is a regularly distributed set (that is, $\Lambda$ has the angular density 
$$
\lim_{r\rightarrow\infty}\dfrac{\card\bigl(\Lambda\cap\{\lambda:|\lambda|<r,\, \theta_1< \arg\lambda\le \theta_2\}\bigr)}{\pi r^2}
$$ 
for all $\theta_1$, $\theta_2$, except, possibly, for a countable set of $\theta_1$, $\theta_2$, and the finite limit $\lim_{r\rightarrow\infty}\sum_{|\lambda|<r,\lambda\in\Lambda}\lambda^{-2}$ exists), then the density of $\Lambda$ is between $\dfrac{2}{\pi}$ and $1$.

In this note, we study what is happening when one does not impose the regular distribution condition on $\Lambda$,
completing thus the work of Ascenzi--Lyubarskii--Seip.

First of all, we prove (Theorem \ref{th0} (b)) that if $\mathcal{G}_\Lambda$ is a complete %and minimal 
system, then the upper density 
of $\Lambda$  
$$
\mathcal{D}_{+}(\Lambda)=\limsup_{r\rightarrow\infty}\frac{\card\bigl(\Lambda\cap B(0,r)\bigr)}{\pi r^2}
$$
is at least $\frac{1}{3\pi}$. Here and later on, $B(z,r)$ is the open disk of center $z$ and radius $r$.  
Next we give (Theorem \ref{th0} (a)) an example of a complete and minimal system $\mathcal{G}_\Lambda$
such that $\mathcal{D}_{+}(\Lambda)=\dfrac{1}{\pi}$. Thus, removing the regularity condition permits us to halve the upper density.
It remains an open question to find the precise lower bound for $\mathcal{D}_{+}(\Lambda)$ for such $\Lambda$.

A simple argument shows that the lower density of $\Lambda$ for a complete and minimal system $\mathcal{G}_\Lambda$,
$$
\mathcal{D}_{-}(\Lambda)=\liminf_{r\rightarrow\infty}\frac{\card\bigl(\Lambda\cap B(0,r)\bigr)}{\pi r^2}
$$
could be as small as $0$. It suffices to consider $\Lambda$ consisting of densely packed points on rapidly increasing circles.
On the other hand, we prove that the upper density cannot be too large for fixed lower density (see Theorem~\ref{th3} below) under 
the minimality condition on the system $\mathcal{G}_\Lambda$.  In particular, 
$\mathcal{D}_{+}(\Lambda)$ does not exceed $e$.

Finally, let us note that possible upper densities of $\Lambda$ for complete and minimal systems $\mathcal{G}_\Lambda$ may vary
at least from $\dfrac{1}{\pi}$ to $e$.

%First of all, a simple argument shows that the lower density of a complete and minimal system $\mathcal{G}_\Lambda$,
%$$
%D_-(\Lambda)=\liminf_{r\rightarrow\infty}\frac{\card(\Lambda\cap B(0,r))}{\pi r^2}
%$$
%could be as small as $0$. It suffices to consider $\Lambda$ consisting of densely packed points on rapidly increasing circles.
%On the other hand, we prove that the upper density cannot be too large for fixed lower density (see Theorem~\ref{th3} below) under 
%the minimality condition on the system $\mathcal{G}_\Lambda$.
%
%Next, we give (Theorem~\ref{th0}~(a)) an example of a complete and minimal system $\mathcal{G}_\Lambda$ with the upper density
% $$D_+(\Lambda)=\limsup_{r\rightarrow\infty}\frac{\card(\Lambda\cap B(0,r))}{\pi r^2}$$
% equal to $\frac{1}{\pi}$. Thus, removing the regularity condition permits us to halve the upper density.
% 
% Finally, we prove (Theorem~\ref{th0}~(b)) that if $\mathcal{G}_\Lambda$ is a complete and minimal system, then $D_+(\Lambda)\ge \frac{1}{3\pi}$.

 \subsection{The Fock space}
 The Bargmann transform is defined by the following formula:
\begin{multline*}
\mathcal{B}f(z)=e^{-i\pi xy}e^{\frac{\pi}{2}|z|^2}\int_\mathbb{R}f(t)\overline{(\rho_{x,-y}\varphi)(t)}dt\\
=2^{1\slash4}\int_\mathbb{R}f(t)e^{-\pi t^2}e^{2\pi t z}e^{-\frac{\pi}{2}z^2}dt,
\end{multline*}
with $z=x+iy$.
 
It maps $L^2(\mathbb{R})$ isometrically onto the (Hilbert) Fock space $\mathcal{F}$ of entire functions: 
$$
\mathcal{F}=\Bigl\{F\in\Hol(\mathbb{C}): \|F\|^2=\int_\mathbb{C}|F(z)|^2e^{-\pi|z|^2}\,dm_2(z)<\infty\Bigr\},
$$
where $m_2$ is planar Lebesque measure (see, for instance, \cite[Section 3.4]{Gr} for this fact and some other properties
of the Bargmann transform).

The reproducing kernel in $\mathcal{F}$ is $k_\lambda(z)=\exp(\pi \bar\lambda z)$, 
$$
\langle f,k_\lambda \rangle =f(\lambda),\qquad f\in \mathcal{F},\, \lambda\in\mathbb C.
$$
Furthermore, $\mathcal{B}\rho_{\Re \lambda, \Im \lambda}\varphi=e^{-\pi|\lambda|^2/2}k_{\bar\lambda}$, $\lambda\in\mathbb C$.

Now a standard duality argument and the symmetry of $\mathcal{F}$ show that for $\Lambda\subset\mathbb{C}$, $\mathcal{G}_\Lambda$ is a complete
and minimal system in $L^2(\mathbb{R})$ if and only if the system of reproducing kernels $\{k_\lambda\}_{\lambda\in\Lambda}$
is a complete and minimal system in $\mathcal{F}$ if and only if $\Lambda$ is a uniqueness set for $\mathcal{F}$ and for 
every $\lambda\in\Lambda$, the set $\Lambda\setminus\{\lambda\}$ is not a uniqueness set for $\mathcal{F}$.

\section{Main results}

We start with two results on the density of complete and minimal Gabor Gaussian systems.

\begin{theorem}
\begin{itemize}
\item[(a)] There exists $\Lambda \subset \mathbb{C}$ such that $\mathcal{D}_{+}(\Lambda) = 1/\pi$ and $\mathcal{G}_\Lambda$ is a complete and minimal system  in $L^2(\mathbb{R})$.
\item[(b)] Let $\Lambda \subset \mathbb{C}$. If $\mathcal{G}_\Lambda$ is a complete system in $L^2(\mathbb{R})$, then $\mathcal{D}_{+}(\Lambda) \ge  \dfrac{1}{3\pi}$.
\end{itemize}
\label{th0}
\end{theorem}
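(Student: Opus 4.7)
The plan is to translate the problem via the Bargmann transform: $\mathcal G_\Lambda$ is complete (respectively complete and minimal) in $L^2(\mathbb R)$ iff $\Lambda$ is a uniqueness set for $\mathcal F$ (respectively, a uniqueness set that ceases to be one upon removing any single point). Thus (a) is about exhibiting a sparse such $\Lambda$, and (b) is about forcing any uniqueness set to not be too sparse.

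For (a), I would take $\Lambda$ to be a union of equispaced points on concentric circles $|z|=r_n$, with $r_n\to\infty$ rapidly. On the $n$-th circle I place $N_n$ equispaced points with $N_n$ chosen so that the cumulative counting function satisfies $n(r_n)\sim r_n^2$, hence $\mathcal D_+(\Lambda)=1/\pi$. Uniqueness reduces to the following: if $F=\sum_k a_k z^k\in\mathcal F$ vanishes on all $N_n$ equispaced points of the $n$-th circle, then taking the discrete Fourier transform around that circle yields, for every residue class $l\bmod N_n$,
$$\sum_{m\ge 0}a_{mN_n+l}\,r_n^{mN_n+l}=0.$$
Combined with the Fock-space decay $|a_k|\le\|F\|\sqrt{\pi^k/k!}$, if $r_n$ and $N_n$ are tuned carefully (possibly with angular offsets between successive circles) the $m=0$ term strictly dominates, forcing $a_l=0$; varying $n$ and $l$ gives $F\equiv 0$. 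Minimality follows by exhibiting, for each $\lambda_0\in\Lambda$, an explicit Fock function vanishing on $\Lambda\setminus\{\lambda_0\}$ but not at $\lambda_0$, obtained by omitting one factor from a suitable Weierstrass-type product. The delicate part is the joint tuning of $r_n$, $N_n$, and angular offsets so that uniqueness, minimality, and the exact density $1/\pi$ are achieved simultaneously.

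For (b), I argue by contradiction: assume $\mathcal D_+(\Lambda)<1/(3\pi)$, and construct a nonzero $F\in\mathcal F$ with $F|_\Lambda=0$. Fix $c$ with $\mathcal D_+(\Lambda)<c<1/(3\pi)$ and $R_0$ so that $n(R)\le c\pi R^2$ for $R\ge R_0$. For each $R\ge R_0$ set
$$P_R(z)=\prod_{\lambda\in\Lambda\cap B(0,R),\,\lambda\ne 0}\Bigl(1-\frac{z}{\lambda}\Bigr),$$
so that $P_R(0)=1$ and $P_R$ vanishes on $\Lambda\cap B(0,R)$. Jensen's formula yields the circular-mean bound
$$\frac{1}{2\pi}\int_0^{2\pi}\log|P_R(se^{i\theta})|\,d\theta=\int_0^{\min(s,R)}\frac{n(r)}{r}\,dr\le\frac{c\pi s^2}{2}\qquad(s\le R).$$
The key step is to upgrade this mean bound to a uniform estimate $\|P_R\|_{\mathcal F}\le C$, using a Cartan-type separation of $\mathbb C$ into near-zero and far-zero regions combined with the Gaussian damping; the precise numerical condition $c<1/(3\pi)$ emerges from optimizing this estimate. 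Once $\|P_R\|_{\mathcal F}$ is bounded, I would normalize $Q_R=P_R/\|P_R\|_{\mathcal F}$, extract a weakly convergent subsequence $Q_{R_k}\rightharpoonup Q\in\mathcal F$, and use continuity of point evaluations $f\mapsto\langle f,k_\lambda\rangle$ to conclude $Q(0)>0$ (hence $Q\ne 0$) and $Q(\lambda)=0$ for each $\lambda\in\Lambda$, contradicting uniqueness. The principal obstacle is the mean-to-$L^2$ upgrade: since canonical products of genus 2 for sequences with $\sum 1/|\lambda|^2=\infty$ typically exhibit $|z|^2\log|z|$ growth on generic circles, the Cartan-type argument must carefully discard small exceptional disks around the zeros and exploit the Gaussian's rapid damping of the annular regions where $|P_R|$ is large.
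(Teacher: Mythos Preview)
Both parts of your proposal have genuine gaps, and in each case the paper supplies a specific idea that your outline is missing.

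\medskip
\textbf{Part (a).} Your discrete-Fourier-transform argument cannot work at density $1/\pi$. With $N_n$ equispaced points on $|z|=r_n$ and $N_n\sim r_n^2$ (forced by $\mathcal D_+(\Lambda)=1/\pi$), the Fock bound $|a_k|\le\|F\|(\pi^k/k!)^{1/2}$ gives $|a_k|r_n^k\lesssim(\pi r_n^2)^{k/2}/\sqrt{k!}$, which is maximized near $k\approx\pi r_n^2>N_n$. Hence in the identity $\sum_{m\ge0}a_{mN_n+l}\,r_n^{mN_n+l}=0$ the $m=0$ term is \emph{not} dominant; the terms with $m$ up to about $\pi$ are at least as large, and no conclusion about $a_l$ follows. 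Angular offsets between circles do not cure this: the obstruction is that a single circle carrying $N_n<\pi r_n^2$ points simply does not control enough Taylor modes. The paper takes an entirely different route. It builds by hand a subharmonic function $h$ of ``rotating'' growth: $h(z)\le\frac{\pi}{2}|z|^2+O(\log|z|)$ everywhere, with near-equality along two logarithmic-spiral curves $\ell_1,\ell_2$ whose complement has only bounded components. Yulmukhametov's approximation theorem then produces an entire $F$ with $\log|F|\approx h$ outside a thin exceptional set; completeness follows because any entire $G$ with $G(0)=1$ must satisfy $|G|\ge1$ along an unbounded sequence on $\ell_1\cup\ell_2$, and there $|FG|e^{-\pi|z|^2/2}$ is bounded below by a negative power of $|z|$. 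The density $1/\pi$ drops out of the explicit circular averages of $h$.

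\medskip
\textbf{Part (b).} The step you yourself flag as the ``principal obstacle'' is indeed the gap. Jensen gives you control of $\frac{1}{2\pi}\int_0^{2\pi}\log|P_R(se^{i\theta})|\,d\theta$, but for the Fock norm you need $\int_0^{2\pi}|P_R(se^{i\theta})|^2\,d\theta$, and there is no general inequality bounding the latter by $\exp$ of the former (Jensen's inequality goes the wrong way). For angularly concentrated $\Lambda$ the ratio between $\max_\theta|P_R(se^{i\theta})|$ and the geometric mean can be of order $e^{cs^2}$, which swamps the Gaussian weight; a Cartan lemma removes small disks around zeros but does not address this large-value region. Nothing in your outline explains why the threshold should be exactly $1/(3\pi)$. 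The paper's key device is a \emph{symmetrization}: replace $\Lambda$ by $\Lambda_1=\Lambda\cup i\Lambda$ (still a uniqueness set, with upper density doubled). In the canonical product over $\Lambda_1$ the quadratic exponential corrections cancel pairwise, since $\dfrac{z^2}{2a^2}+\dfrac{z^2}{2(ia)^2}=0$, leaving factors $\bigl(1-\frac{z}{a}\bigr)\bigl(1-\frac{z}{ia}\bigr)e^{(1-i)z/a}$ whose maximum on each circle is \emph{explicitly} $M_f(r)=\log(1-\sqrt{2}r+r^2)+\sqrt{2}r$. A direct computation of $\int_0^\infty M_f(s)\,s^{-3}\,ds$ then yields the constant $\tfrac{3\pi}{2}$, and comparing with the Fock threshold $\tfrac{\pi}{2}$ gives precisely $1/(3\pi)$.
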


\begin{theorem} \begin{itemize}
\item[(a)] Given $0\le  \beta<1$, there exists $\Lambda\subset\mathbb{C}$ such that $\mathcal{D}_{+}(\Lambda)>1$,  
$$
\beta=\mathcal{D}_{-}(\Lambda)=\mathcal{D}_{+}(\Lambda)\log\frac{e}{\mathcal{D}_{+}(\Lambda)},
$$
and $\mathcal{G}_\Lambda$ is a complete and minimal system in $L^2(\mathbb{R})$. 
\item[(b)] On the other hand, if $\mathcal{G}_\Lambda$
is a minimal system in $L^2(\mathbb{R})$ and $\mathcal{D}_{+}(\Lambda)>1$, then 
$$\mathcal{D}_{-}(\Lambda)\le \mathcal{D}_{+}(\Lambda)\log\frac{e}{\mathcal{D}_{+}(\Lambda)}.$$
\end{itemize}
\label{th3}
\end{theorem}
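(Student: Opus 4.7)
The plan is to address (b) first: the inequality it establishes pinpoints the extremal structure behind the construction in (a).

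\emph{Proof of (b).} By the reformulation at the end of the previous subsection, the minimality of $\mathcal{G}_\Lambda$ provides, for a fixed $\lambda_0 \in \Lambda$, a nonzero $F \in \mathcal{F}$ with $F|_{\Lambda \setminus \{\lambda_0\}} = 0$. The reproducing-kernel estimate $|F(z)| \le \|F\|\, e^{\pi|z|^2/2}$ plugged into Jensen's formula (centered at $0$, with a harmless shift if $F(0)=0$) yields, for $n_\Lambda(t) := \card\bigl(\Lambda \cap B(0,t)\bigr)$,
\[
N_\Lambda(R) := \int_0^R \frac{n_\Lambda(t)}{t}\,dt \le \frac{\pi R^2}{2} + o(R^2), \qquad R \to \infty.
\]
Choose $R_n \to \infty$ realizing the upper density, so $n_\Lambda(R_n) = (1+o(1))\pi R_n^2 \mathcal{D}_{+}(\Lambda)$. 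Monotonicity of $n_\Lambda$ gives $N_\Lambda(uR_n) \ge N_\Lambda(R_n) + n_\Lambda(R_n) \log u$ for every $u > 1$, and combining with Jensen at the radius $uR_n$ produces the family of inequalities
\[
\frac{N_\Lambda(R_n)}{\pi R_n^2} \le \frac{u^2}{2} - \mathcal{D}_{+}(\Lambda)\,\log u + o(1), \qquad u > 1.
\]
The right-hand side is minimized at $u = \sqrt{\mathcal{D}_{+}(\Lambda)}$, which lies in $(1,\infty)$ by the hypothesis $\mathcal{D}_{+}(\Lambda) > 1$, with minimal value $\mathcal{D}_{+}(\Lambda)\bigl(1 - \log\mathcal{D}_{+}(\Lambda)\bigr)/2$. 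In the opposite direction, the definition of $\mathcal{D}_{-}(\Lambda)$ yields $n_\Lambda(t) \ge \pi t^2(\mathcal{D}_{-}(\Lambda) - \epsilon)$ for all sufficiently large $t$, whence $N_\Lambda(R_n)/(\pi R_n^2) \ge \mathcal{D}_{-}(\Lambda)/2 - o(1)$. Comparing the two bounds gives (b); specializing to $\mathcal{D}_{-}(\Lambda) = 0$ recovers the bound $\mathcal{D}_{+}(\Lambda) \le e$.

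\emph{Proof of (a).} The extremality analysis of (b) suggests the following $\Lambda$: place bursts of $\approx \pi R_n^2(\mathcal{D}_{+} - \mathcal{D}_{-})$ points on a rapidly growing sequence of shells of radii $R_n$ (with $R_{n+1}/R_n \to \infty$), superimposed on a background of density $\mathcal{D}_{-}$ filling the intervening annuli, tuned so that Jensen's bound is essentially saturated both at the scales $R_n$ and at the intermediate critical scales $R_n\sqrt{\mathcal{D}_{+}}$. Such $\Lambda$ can be realized (up to a controlled perturbation) as the zero set of a genus-$2$ canonical product whose shell factor is of the form $1 - (z/R_n)^{k_n}$ with $k_n \approx \pi R_n^2(\mathcal{D}_{+} - \mathcal{D}_{-})$ and whose background factor corresponds to the lattice density $\mathcal{D}_{-}$. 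Membership in $\mathcal{F}$ follows from a direct growth estimate matching the optimized Jensen calculation in (b); completeness holds because $\Lambda$ is a uniqueness set for $\mathcal{F}$, and minimality because, for each $\lambda \in \Lambda$, division by $z - \lambda$ preserves $\mathcal{F}$-membership of the canonical product.

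The hard step is (a): one must simultaneously secure the prescribed upper and lower densities, $\mathcal{F}$-membership of the interpolating entire function, the uniqueness of $\Lambda$, and the non-uniqueness of $\Lambda \setminus \{\lambda\}$ for every $\lambda$. Part (b), by contrast, reduces to a single shrewd optimization in Jensen's inequality — once one spots the correct scaling $u = \sqrt{\mathcal{D}_{+}(\Lambda)}$, the argument closes in a few lines.
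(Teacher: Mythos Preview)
Your argument for (b) is correct and is essentially the paper's proof: both apply Jensen's inequality at radius $\gamma R_k$ with $\gamma=\sqrt{\mathcal{D}_+(\Lambda)}$, splitting the integrated counting function at $R_k$ so that the lower density controls the contribution from $[0,R_k]$ and the burst $n_\Lambda(R_k)$ controls the contribution from $[R_k,\gamma R_k]$.

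For (a) you give only an outline, and the outline has a genuine gap. The structural idea---a background of density $\beta$ plus bursts of mass $\approx(\mathcal{D}_+-\beta)\pi R_n^2$ on sparse circles of radii $R_n$---is exactly the paper's. But the step you leave unproved is \emph{completeness}, i.e.\ that $\Lambda$ is a uniqueness set for $\mathcal{F}$. Writing ``completeness holds because $\Lambda$ is a uniqueness set'' is circular; what must be shown is that the generating function $F$ actually \emph{attains} the critical growth $\log|F(z)|=\tfrac{\pi}{2}|z|^2+O(\log|z|)$ along some sequence $|z|\to\infty$, so that $FG\notin\mathcal{F}$ for every nonzero entire $G$. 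This is precisely the delicate balance of the construction: the burst sizes must be tuned so that the global upper bound $\log|F|\le\tfrac{\pi}{2}|z|^2+O(\log|z|)$ (needed for minimality) is saturated at the scales $R_n\sqrt{\mathcal{D}_+}$ (needed for completeness). The paper secures this by first building a radially symmetric subharmonic model $h$ with $\Delta h=2\pi r\,d\nu(r)\otimes d\varphi$ for an explicit $\nu$, verifying by direct computation both that $h(aR_k)=\tfrac{\pi}{2}a^2R_k^2+O(\log R_k)$ (with $a^2=\mathcal{D}_+$) and that $h\le\tfrac{\pi}{2}|z|^2+O(\log|z|)$ everywhere, and then invoking Yulmukhametov's approximation theorem to produce an entire $F$ with $\log|F|=h+O(\log|z|)$ off a thin exceptional set; a finite adjustment via Lemma~\ref{le1} then gives the complete and minimal system. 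Your canonical-product route with factors $1-(z/R_n)^{k_n}$ is plausible in spirit, but the required two-sided growth estimates---especially the lower bound at the critical radii---are not routine and are not supplied.
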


In particular, if $\mathcal{G}_\Lambda$
is a (complete and) minimal system in $L^2(\mathbb{R})$, then $\mathcal{D}_{+}(\Lambda)\le e$, and this estimate is sharp.

Applying the Bargmann transform, we can reformulate these results in the language of the Fock space.

\begin{theorem}
\begin{itemize}
\item[(a)] 
There exists $\Lambda \subset \mathbb{C}$ such that $\mathcal{D}_{+}(\Lambda) = 1/\pi$ and $\{k_\lambda\}_{\lambda\in\Lambda}$ is a complete and minimal system in $\mathcal{F}$.
\item[(b)] 
Let $\Lambda \subset \mathbb{C}$. If $\{k_\lambda\}_{\lambda\in\Lambda}$ is a complete system in $\mathcal{F}$, then 
$\mathcal{D}_{+}(\Lambda) \ge  \dfrac{1}{3\pi}$.
\end{itemize}
\label{th2}
\end{theorem}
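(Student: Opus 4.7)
The plan is to derive Theorem \ref{th2} as an immediate consequence of Theorem \ref{th0} via the Bargmann transform dictionary already recorded in Section 1.1. There is no fresh analytic content to prove here: the statement is a pure translation.

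First, I would reiterate the correspondence. Since $\mathcal{B}\colon L^2(\mathbb{R})\to\mathcal{F}$ is a unitary isomorphism and $\mathcal{B}\rho_{\Re\lambda,\Im\lambda}\varphi = e^{-\pi|\lambda|^2/2}k_{\bar\lambda}$, the system $\mathcal{G}_\Lambda$ is complete (respectively, complete and minimal) in $L^2(\mathbb{R})$ if and only if $\{k_{\bar\lambda}\}_{\lambda\in\Lambda}=\{k_\mu\}_{\mu\in\bar\Lambda}$ is complete (respectively, complete and minimal) in $\mathcal{F}$; unitarity preserves both properties, and multiplication of each vector by the nonzero scalar $e^{-\pi|\lambda|^2/2}$ changes neither. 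Here $\bar\Lambda:=\{\bar\lambda:\lambda\in\Lambda\}$.

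Next, I would observe that upper density is invariant under complex conjugation, since every disk $B(0,r)$ is conjugation-symmetric. Thus $\mathcal{D}_+(\bar\Lambda)=\mathcal{D}_+(\Lambda)$ for every $\Lambda\subset\mathbb{C}$.

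The two parts now follow mechanically. For (a), take the set $\Lambda_0$ supplied by Theorem \ref{th0}(a) and set $\Lambda:=\bar{\Lambda_0}$; then $\{k_\lambda\}_{\lambda\in\Lambda}$ is complete and minimal in $\mathcal{F}$ and $\mathcal{D}_+(\Lambda)=\mathcal{D}_+(\Lambda_0)=1/\pi$. For (b), if $\{k_\lambda\}_{\lambda\in\Lambda}$ is complete in $\mathcal{F}$, then $\mathcal{G}_{\bar\Lambda}$ is complete in $L^2(\mathbb{R})$, so by Theorem \ref{th0}(b), $\mathcal{D}_+(\Lambda)=\mathcal{D}_+(\bar\Lambda)\ge 1/(3\pi)$. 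The only thing one must be mindful of is the conjugation that enters through the formula for $\mathcal{B}\rho_{\Re\lambda,\Im\lambda}\varphi$; but since density is conjugation-invariant, this cosmetic twist has no effect. Consequently, the entire proof amounts to citing Theorem \ref{th0} and invoking the Bargmann correspondence, and no separate obstacle arises at this stage.
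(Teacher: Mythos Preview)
Your reduction via the Bargmann transform is correct as a piece of logic, and indeed the paper itself records exactly this equivalence in the sentence ``Applying the Bargmann transform, we can reformulate these results in the language of the Fock space.'' The problem is one of direction: in this paper Theorem~\ref{th0} is not proved on its own. The only proofs supplied in Section~3 are for the Fock-space versions, Theorems~\ref{th2} and~\ref{th4}; Theorems~\ref{th0} and~\ref{th3} are obtained from those via the very Bargmann dictionary you invoke. So your proposal, ``derive Theorem~\ref{th2} from Theorem~\ref{th0},'' is circular here: it defers the entire analytic content back to a statement whose only justification in the paper is Theorem~\ref{th2} itself.

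What the paper actually does for Theorem~\ref{th2} is substantial. For part~(a) it builds by hand a subharmonic function $h$ of ``rotating'' growth (via the auxiliary functions $g_1,g_2,g_3$ on the spiral/annular decomposition $P_1,P_2,P_3$), applies Yulmukhametov's approximation theorem to produce an entire function $F$ with $\log|F|\approx h$, checks that the zero-counting function satisfies $n(r)=(|\sin(\log\log r)|+o(1))r^2$ so that $\mathcal D_+(\mathcal Z_F)=1/\pi$, and then uses Lemma~\ref{le1} to pass to a complete and minimal system of reproducing kernels. For part~(b) it symmetrizes $\Lambda$ to $\Lambda\cup i\Lambda$, forms the canonical product, reduces the growth estimate to the explicit function $M_f(r)=\log(1-\sqrt 2 r+r^2)+\sqrt 2 r$, and evaluates an integral to obtain the constant $3\pi/2$, yielding $\mathcal D_+(\Lambda)\ge 1/(3\pi)$. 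None of this is supplied by your proposal; you would need to carry it out somewhere, and the natural place---given how the paper is organized---is precisely in the proof of Theorem~\ref{th2}.
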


\begin{theorem} \begin{itemize}
\item[(a)] Given $0\le  \beta<1$, there exists $\Lambda\subset\mathbb{C}$ such that $\mathcal{D}_{+}(\Lambda)>1$,  
$$
\beta=\mathcal{D}_{-}(\Lambda)=\mathcal{D}_{+}(\Lambda)\log\frac{e}{\mathcal{D}_{+}(\Lambda)}, 
$$
and $\{k_\lambda\}_{\lambda\in\Lambda}$ is a complete and minimal system in $\mathcal{F}$. 
\item[(b)] On the other hand, if $\{k_\lambda\}_{\lambda\in\Lambda}$ is a minimal system in $\mathcal{F}$ and 
$\mathcal{D}_{+}(\Lambda)>1$, then 
$$
\mathcal{D}_{-}(\Lambda)\le \mathcal{D}_{+}(\Lambda)\log\frac{e}{\mathcal{D}_{+}(\Lambda)}.$$
\end{itemize}
\label{th4}
\end{theorem}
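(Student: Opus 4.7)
Via the Bargmann isomorphism recalled in the introduction, Theorem~\ref{th4} is Theorem~\ref{th3} transcribed into the Fock space, and the characterization of minimality given there furnishes the following witness: whenever $\{k_\lambda\}_{\lambda\in\Lambda}$ is minimal, for each $\lambda_0\in\Lambda$ there is a nonzero $f\in\mathcal{F}$ with $f(\lambda_0)\ne 0$ vanishing on $\Lambda\setminus\{\lambda_0\}$. I will exploit such $f$ through a Jensen-formula argument.

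For part (b), I would first use the unitary Weyl translation $(W_a g)(z) = e^{-\pi|a|^2/2 - \pi\bar a z} g(z+a)$ on $\mathcal{F}$, which sends $k_\lambda$ to a scalar multiple of $k_{\lambda-a}$ and therefore preserves both minimality and the densities $\mathcal{D}_\pm$, to reduce to $0\in\Lambda$. Taking $\lambda_0=0$ gives $f\in\mathcal{F}$ with $f(0)\ne 0$ and $f|_{\Lambda\setminus\{0\}}\equiv 0$, so Jensen's formula
$$
M_f(r) = \log|f(0)| + \int_0^r \frac{n_f(t)}{t}\,dt
$$
applies cleanly, where $M_f(r) = \frac{1}{2\pi}\int_0^{2\pi}\log|f(re^{i\theta})|\,d\theta$ and $n_f$ is the zero-counting function of $f$. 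The reproducing-kernel bound $|f(z)|\le\|f\|\,e^{\pi|z|^2/2}$ gives $M_f(r)\le \pi r^2/2+\log\|f\|$. Writing $\alpha=\mathcal{D}_+(\Lambda)$ and $\beta=\mathcal{D}_-(\Lambda)$, fixing $\epsilon>0$, I pick $R_k\to\infty$ with $n_\Lambda(R_k)\ge(\alpha-\epsilon)\pi R_k^2$ and $T_\epsilon$ with $n_\Lambda(t)\ge(\beta-\epsilon)\pi t^2$ for $t\ge T_\epsilon$. For any $s>1$, splitting the Jensen integral at $R_k$ and using $n_f\ge n_\Lambda-1$ on $[T_\epsilon,R_k]$ together with the monotonicity $n_f(t)\ge n_f(R_k)$ on $[R_k,sR_k]$ yields
$$
M_f(sR_k)-\log|f(0)| \;\ge\; \frac{(\beta-\epsilon)\pi R_k^2}{2} + (\alpha-\epsilon)\pi R_k^2\log s - O(\log R_k).
$$
Combining with the upper bound on $M_f(sR_k)$, dividing by $\pi R_k^2$, and letting $R_k\to\infty$ and then $\epsilon\to 0$, produces the family of inequalities
$$
\frac{\beta}{2} + \alpha\log s \;\le\; \frac{s^2}{2}, \qquad s\ge 1.
$$
The gap $s^2/2-\alpha\log s$ is minimized over $s\ge 1$ at $s=\sqrt{\alpha}$ (admissible because $\alpha>1$), with minimum value $\tfrac{\alpha}{2}(1-\log\alpha)$. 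Hence $\beta\le\alpha(1-\log\alpha)=\alpha\log(e/\alpha)$, which is the assertion of (b).

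For part (a), given $\beta\in[0,1)$ there is a unique $\alpha\in(1,e]$ with $\beta=\alpha\log(e/\alpha)$, and the task is to build $\Lambda$ saturating the inequalities of the (b)-argument. The natural candidate is a concentric-shell configuration: take $R_{k+1}=\sqrt{\alpha/\beta}\,R_k$ and place $N_k\approx(\alpha-\beta)\pi R_k^2$ equispaced points on each circle $|z|=R_k$, with no points off the circles. A direct count then gives $\mathcal{D}_+(\Lambda)=\alpha$ (attained at $R_k^+$) and $\mathcal{D}_-(\Lambda)=\beta$ (attained at $R_k^-$). The hardest step is to verify completeness and minimality of $\{k_\lambda\}_{\lambda\in\Lambda}$: one must exhibit, for each $\lambda_0\in\Lambda$, an $\mathcal{F}$-function vanishing on $\Lambda\setminus\{\lambda_0\}$ (a suitably regularized Weierstrass product is the candidate), and simultaneously show that no nonzero $\mathcal{F}$-function vanishes on all of $\Lambda$. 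Both require sharp two-sided control on the growth of products of the form $\prod_k(1-(z/R_k)^{N_k})$ across the annuli between the shells, placing the canonical product exactly on the boundary of $\mathcal{F}$. Naive equispaced choices overshoot the Fock-space growth bound at the critical radii; the true construction will likely need a perturbation of the ring configuration (small radial shifts, angular offsets, or the addition of a few filler points) to defeat resonance effects and land the product precisely on the threshold, and I expect this sharp calibration to be the central technical difficulty of the construction.
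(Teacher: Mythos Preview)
Your argument for part~(b) is correct and is essentially the paper's proof: both pass to a function $f\in\mathcal F$ vanishing on $\Lambda\setminus\{\lambda_0\}$, combine Jensen's formula with the pointwise bound $|f(z)|\le\|f\|e^{\pi|z|^2/2}$ and the two density hypotheses, and optimize the free scale parameter at $\sqrt{\alpha}$. Your use of the Weyl translation to arrange $f(0)\ne 0$ is a clean way to set up Jensen; the paper applies the inequality a bit more informally but the substance is identical.

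For part~(a) you only sketch an approach and explicitly flag its difficulties, so there is a genuine gap, and the paper's construction differs from what you propose. The paper does not build an explicit Weierstrass product on rings. Instead it writes down a radially symmetric subharmonic function $h$ whose Riesz measure is
$$
d\nu(r)=\beta\,dr+\sum_{k\ge 1}\Bigl(\tfrac12(a^2-\beta)R_k\,\delta_{R_k}-\beta\,\chi_{[R_k,\delta R_k]}\,dr\Bigr),\qquad a^2=\alpha,\ \delta=a/\sqrt{\beta},
$$
i.e.\ a continuous background of density $\beta$ plus concentrated rings and short gaps, with the $R_k$ chosen to grow \emph{rapidly} (so that earlier shells contribute only $O(\log R_k)$). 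One then checks by direct calculus that $h(aR_k)=\tfrac{\pi}{2}(aR_k)^2+O(\log R_k)$, that $h(r)\le\tfrac{\pi}{2}r^2+O(\log r)$ everywhere, and that the mass of $\Delta h$ in $B(0,R)$ realizes the prescribed upper and lower densities $a^2$ and $\beta$. The passage to an entire function is done not by hand but via Yulmukhametov's approximation theorem (Theorem~\ref{yu}), which produces $F$ with $|\log|F|-h|=O(\log|z|)$ off a thin exceptional set; Lemma~\ref{le1} then adjusts finitely many zeros to land exactly on the complete-and-minimal threshold. This machinery is precisely what absorbs the ``sharp calibration'' you anticipated.

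Two comments on your ansatz: the geometric spacing $R_{k+1}=\sqrt{\alpha/\beta}\,R_k$ degenerates at $\beta=0$ (where $\alpha=e$), and even for $\beta>0$ a pure ring configuration with fixed ratio forces all earlier shells to contribute at leading order to the Jensen integral, so matching $\tfrac{\pi}{2}r^2$ to within $O(\log r)$ along a sequence is not automatic. The paper's combination of super-geometric $R_k$ with a continuous $\beta$-background is what makes the bookkeeping tractable.
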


\section{Proofs}

\begin{proof}[Proof of Theorem \ref{th2}] Part (a).  
Denote by $\mathcal{E}$ the set of all entire functions and by $\mathcal{F}_0$ the set of all functions $F$ analytic in 
$\mathbb{C}\setminus\overline{B(0,1)}$ and such that
$$
\int_{|z|>1}|F(z)|^2e^{-\pi|z|^2}\,dm_2(z)<\infty.
$$
Given a function F in $\mathcal{E}$, denote by $\mathcal{Z}_F$ its zero set.

We start with the following elementary statement.

\begin{lemma}
Let $F$ be an entire function with simple zeros such that for some $m,n\in\mathbb{Z}$ we have $z^mF\in\mathcal{F}_0$, $z^nF\mathcal{E}\cap \mathcal{F}_0=\{0\}$.
Then, adding to $\mathcal{Z}_F$ or removing from $\mathcal{Z}_F$ a finite set of points, we obtain a set $\Lambda$ such that $\{k_\lambda\}_{\lambda\in\Lambda}$ is a complete 
and minimal system in $\mathcal{F}$.
\label{le1}
\end{lemma}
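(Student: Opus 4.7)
My plan is to replace $F$ by a product $\tilde F = F \cdot R$, where $R$ is a rational function chosen so that $\tilde F$ is entire with simple zeros; this adjusts $\mathcal{Z}_F$ by a finite set of points (added zeros coming from $R$, or zeros of $F$ removed by poles of $R$) to obtain $\Lambda = \mathcal{Z}_{\tilde F}$. From the Fock-space characterization recalled above, completeness and minimality of $\{k_\lambda\}_{\lambda \in \Lambda}$ in $\mathcal{F}$ reduce to $\Lambda$ being a uniqueness set together with $\tilde F/(z-\lambda) \in \mathcal{F}$ for every $\lambda \in \Lambda$. I aim to ensure both by arranging that $\tilde F$ satisfies two clean conditions: (A) $z^{-1}\tilde F \in \mathcal{F}_0$, which yields $\tilde F/(z-\lambda) \in \mathcal{F}$ for every simple zero $\lambda$; and (B) $\tilde F \mathcal{E} \cap \mathcal{F}_0 = \{0\}$, which via the inclusion $\mathcal{F}\subset \mathcal{F}_0$ forces any $H \in \mathcal{F}$ vanishing on $\Lambda$ to be zero.

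To choose the net degree $d$ of $R$ (numerator degree minus denominator degree), I first observe the monotonicity of the hypotheses: $z^m F\in\mathcal{F}_0$ is preserved when $m$ decreases, and $z^n F\mathcal{E}\cap\mathcal{F}_0=\{0\}$ is preserved when $n$ increases (replace a test function $G$ by $z^{n'-n}G$, still entire). Sharpening the given $m,n$ to $m^* := \sup\{m':z^{m'}F\in\mathcal{F}_0\}$ and $n^* := \inf\{n':z^{n'}F\mathcal{E}\cap\mathcal{F}_0=\{0\}\}$, I claim $n^* \le m^*+1$; this I expect to follow from expanding an entire $G$ satisfying $z^{m^*+1}FG\in\mathcal{F}_0$ in Taylor series at $0$ and using Parseval (for radial mean squares) to show each coefficient must vanish. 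With this inequality, I pick an integer $d\in[n^*,m^*+1]$ and construct $R$: if $d\ge 0$, a polynomial of degree $d$ with simple zeros outside $\mathcal{Z}_F$; if $d<0$, the reciprocal of a polynomial of degree $-d$ whose roots are chosen from among the simple zeros of $F$. Either way $\tilde F$ is entire with simple zeros, and $\Lambda$ differs from $\mathcal{Z}_F$ by a finite set.

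The verifications are then straightforward asymptotic analysis at infinity, since $|R(z)|\sim C|z|^d$ as $|z|\to\infty$. For (A), $|z^{-1}\tilde F|^2 \sim |z|^{2(d-1)}|F|^2$ outside a large disk, and $d-1\le m^*$ gives $z^{d-1}F\in\mathcal{F}_0$. For (B), $\tilde F G\in\mathcal{F}_0$ gives $z^d F G\in\mathcal{F}_0$ via $|\tilde F G|^2\sim|z|^{2d}|FG|^2$; writing $z^d F G = z^{n^*}F\cdot(z^{d-n^*}G)$ with $z^{d-n^*}G$ entire (since $d\ge n^*$), the hypothesis on $F$ forces $z^{d-n^*}G=0$, hence $G=0$. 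Biorthogonality at each $\lambda\in\Lambda$ follows from (A): $\tilde F/(z-\lambda)$ is a nonzero element of $\mathcal{F}$ vanishing exactly on $\Lambda\setminus\{\lambda\}$. Uniqueness follows from (B): any $H\in\mathcal{F}$ vanishing on $\Lambda$ factors as $\tilde F G$ for entire $G$, and $H\in\mathcal{F}\subset\mathcal{F}_0$ combined with (B) gives $G=0$.

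The main obstacle is the inequality $n^*\le m^*+1$, i.e., verifying that $z^{m^*+1}FG\in\mathcal{F}_0$ forces $G=0$ for every entire $G$; this requires a Parseval-type argument together with some care when $m^*$ or $n^*$ is not attained. The remaining bookkeeping, namely transferring the conditions on $F$ through the rational multiplication and matching the exponents, is routine.
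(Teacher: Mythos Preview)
Your approach coincides with the paper's: modify $F$ by a rational factor of a well-chosen degree and then verify that the new zero set gives a complete and minimal system of reproducing kernels via the two conditions you label (A) and (B). The paper handles what you identify as ``the main obstacle'' (your inequality $n^*\le m^*+1$) in a single phrase---``Without loss of generality, we can assume $m=n-1$''---with no further argument, and then simply takes the degree of the rational factor equal to $n$; your proposed Parseval justification for this reduction is thus already more than the paper provides, though it would still need to be carried out carefully to be convincing.
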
 

\begin{proof} Without loss of generality, we can assume $m=n-1$. If $n\ge  0$, set $F_1(z)=F(z)(z-\lambda_1)...(z-\lambda_n)$ for some distinct $\lambda_j\in\mathbb{C}\setminus\mathcal{Z}_F$, $1\le  j\le  n$.
Otherwise, set $F_1(z)=F(z)\slash((z-\lambda_{1})...(z-\lambda_{-n}))$ for some zeros $\lambda_1,...,\lambda_{-n}$ of $F$. Then $F_1$ is an entire function, for every zero $\lambda$ of $F_1$
we have $F_1(z)\slash(z-\lambda)\in\mathcal{F}$, and for every entire function $G\neq0$, $F_1G\not\in\mathcal{F}$. Hence, the system $\{k_\lambda\}_{\lambda\in\Lambda}$ is
complete and mimimal in $\mathcal{F}$, where $\Lambda=\mathcal Z(F_1)$.
\end{proof}

First we construct an auxiliary subharmonic function of ``rotating" growth and then approximate it by the logarithm of an entire function. This entire function $F$ satisfies
the conditions of Lemma~\ref{le1} and we have $\mathcal{D}_{+}(\mathcal{Z}_F) = 1/\pi$.

Fix a large integer $K$, set $R=\exp\exp (\pi K)$, and for $|z|>R$ define
\begin{align*}
\theta(z)&=\log\log|z|,\\
g_1(z) &= \cos(2 \arg(z)),\\
g_2(z) &= \cos \bigl(2 \arg(z) - 2 \theta(z)\bigr),\\ 
g_3(z) &= \cos\bigl(2\arg(z) - \theta(z)\bigr) \cos(\theta (z)).
\end{align*}
Then $g_3(z)=(g_1(z)+g_2(z))\slash 2$.
Next we set 
\begin{align*}
S_n&=\{z\in\mathbb{C}: \theta(z)=\pi n\}, \qquad n\ge  K,\\
\gamma_k&=\Bigl\{z\in\mathbb{C}: \arg z=\frac{\theta(z)}2+\frac{\pi k}2 \ (\text{mod}\ 2\pi),\,|z|\ge  R\Bigr\},\qquad 1\le k\le 4.
\end{align*}
Furthermore, set
$$
S=\Bigl(\bigcup_{k=1}^4\gamma_k\Bigr)\bigcup\Bigl(\bigcup_{n\ge  K} S_n\Bigr).
$$
If $z \in S$, then
$g_1(z) = g_2(z) = g_3(z)$. 
Now set
$$ 
L_n = \bigl\{z \in \mathbb{C}: \pi n < \theta(z) < \pi (n+1) \bigr\},\qquad  n\ge  K.
$$
The curves $\gamma_1, \gamma_2, \gamma_3, \gamma_4$ divide $L_n$ into four disjoint open domains. We denote by $L_n^k$ 
the domain between $\gamma_k$ and $\gamma_{k+1}$, $1\le k\le 4$ (with the notation $\gamma_5=\gamma_1$).
Finally, set $\ell_1 = \mathbb{R}_+$ and 
$$
\ell_2 = \bigl\{z \in \mathbb{C}: \arg(z) \mathop{\equiv} \theta(z) \ (\text{mod}\ 2\pi),\, |z|\ge  R \bigr\},
$$
see Figure~\ref{fi1}.

For every $n\ge K$, we have $\ell_1\cap L_n=\ell_1\cap L^{u(n,1)}_n$ with $u(n,1)\equiv 3-n \text{ (mod  4)}$ and $\ell_2\cap L_n=\ell_2\cap L^{u(n,2)}_n$
with $u(n,2)\equiv n \text{ (mod  4)}$. Denote
\begin{align*}
P_1&=\bigcup_{n\ge  K}L^{u(n,1)}_n, \\
P_2&=\bigcup_{n\ge  K}L^{u(n,2)}_n,\\
P_3&=\bigcup_{n\ge  K}L_n\setminus(P_1\cup P_2\cup S).
\end{align*}

\begin{figure}
\hskip -1.89cm 
\includegraphics[width=14.5cm]{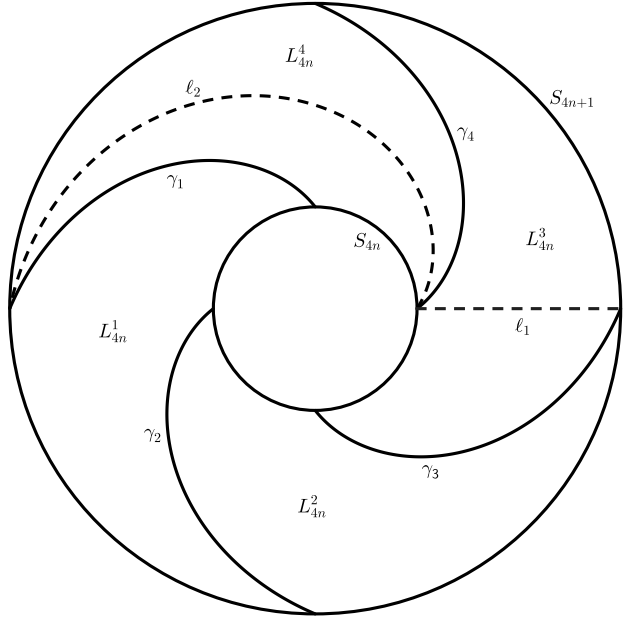} \hfill
\vskip -2cm
\caption{Annulus between $S_{4n}$ and $S_{4n+1}$}
\label{fi1}
\end{figure}

Consider a continuous function $g$ defined on $P=\overline{P_1\cup P_2\cup P_3}$ in the following way:
\begin{equation*}
g(z) = \begin{cases} &g_3(z), \qquad z \in P\setminus(P_1\cup P_2),\\
&g_1(z), \qquad z \in P_1, \\
&g_2(z), \qquad z \in P_2. \end{cases}
\end{equation*}

Since $g_1(z)-g_2(z)=2\sin(\theta(z))\sin\bigl(\theta(z)-2\arg z\bigr)$, we have $g_1\ge  g_2$ on $\ell_1\cap L^{u(n,1)}_n$, $g_1=g_2$ on 
$\partial L^{u(n,1)}_n$, and, hence, $g_1\ge  g_2$ on $L^{u(n,1)}_n$, $n\ge  K$. Analogously, $g_2\ge  g_1$ on $L^{u(n,2)}_n$, $n\ge  K$.
Therefore, $g\ge  g_3$ on $P$.

Given $r>0$, we have $\int_0^{2\pi}g_1(re^{i\varphi})d\varphi=\int_{0}^{2\pi}g_2(re^{i\varphi})d\varphi=0$, and, hence, 
\begin{multline*}
\int_0^{2\pi}g(re^{i\varphi})d\varphi\\=\frac{1}{2}\int_{re^{i\varphi}\in P_1}(g_1-g_2)(re^{i\varphi})d\varphi+
\frac{1}{2}\int_{re^{i\varphi}\in P_2}(g_2-g_1)(re^{i\varphi})d\varphi\\=2|\sin\theta(r)|.
\end{multline*}

Next we define the functions
\begin{align*}
h(re^{i\varphi}) &= \left(\frac{\pi r^2}{2} - \frac{4r^2}{\log r} \right)g(r e^{i\varphi}) + \frac{4r^2}{\log r},\\
h_j(re^{i\varphi}) &= \left(\frac{\pi r^2}{2} - \frac{4r^2}{\log r} \right)g_j(r e^{i\varphi}) + \frac{4r^2}{\log r},\qquad j=1,2,3.
\end{align*}
Direct calculation shows that the functions $h_j$, $1\le j \le 3$, are subharmonic on $P$ if $K$ is sufficiently large. Fix such $K$. 

We have $h=h_3$ on $S$ and $h\ge  h_3$ on $P$. Let $\tilde h$ be the harmonic extension of $h$ into 
$B(0,\exp\exp (\pi K))$. For a sufficiently large $L$, the function 
$$
f(z)=\begin{cases}
h(z),\qquad z\in P,\\
\tilde h(z)-L\log\frac{|z|}{\exp\exp (\pi K)},\qquad z\in\mathbb C\setminus P, 
\end{cases}
$$
is subharmonic in $\mathbb C\setminus \{0\}$. Fix such $L$. 
Furthermore, $f(z)= h(z)$ for sufficiently large $|z|$ and  $f$ is a subharmonic function of order 2.

Next, we are going to use the following approximation result of Yulmukhametov in \cite{Yul}.

\begin{theorem}
Let $f$ be a subharmonic function in the complex plane of finite order $\rho$. Then there exists an entire function $F$
such that for every $\alpha\ge \rho$,
$$
|\log|F(z)|-f(z)|\le  C_\alpha\log|z|,\qquad z\in \mathbb C\setminus E(f,\alpha),
$$
where $E(f,\alpha)$ is covered by a family of disks $B(z_j,t_j)$ such that
$$
\sum_{|z_j|>R}t_j=o(R^{\rho-\alpha}),\qquad R\rightarrow\infty.
$$
\label{yu}
\end{theorem}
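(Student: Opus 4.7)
The plan is to follow Yulmukhametov's original atomization technique. The starting point is the Riesz decomposition: writing $\mu=(2\pi)^{-1}\Delta f$ for the Riesz measure of $f$, we have $f(z)=\int\log|z-\zeta|\,d\mu(\zeta)+h(z)$ with $h$ harmonic of order at most $\rho$, and the counting estimate $\mu(B(0,R))=O(R^{\rho+\varepsilon})$ holds for every $\varepsilon>0$. I would look for an entire function $F$ in the form of a Weierstrass product of genus $\lceil\rho\rceil$ whose zeros $\{\zeta_j\}$, counted with integer multiplicities $n_j$, form a discrete measure $\nu=\sum n_j\delta_{\zeta_j}$ approximating $\mu$. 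Since $\log|F|$ equals a canonical potential of $\nu$ up to a polynomial factor from the Weierstrass primary factors, and since $h=\operatorname{Re} H$ for some entire $H$ (which can be absorbed by replacing $F$ with $F\exp H$), the problem reduces to comparing the potentials of $\mu$ and $\nu$ pointwise.

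The central construction is a partition of each annulus $\{r\le|\zeta|<2r\}$ into curvilinear cells $Q$ of side length $\ell_Q\asymp r^{-\beta}$, for an exponent $\beta=\beta(\alpha)>0$ to be chosen. In each cell I would set $n_Q$ equal to an integer close to $\mu(Q)$ and place all this mass at a single point $\zeta_Q\in Q$ selected so that the first moment is matched: $n_Q\zeta_Q=\int_Q\zeta\,d\mu(\zeta)+O(\ell_Q)$. With moment matching in place, a Taylor expansion of $\log|z-\zeta|$ around $\zeta_Q$ for $\zeta\in Q$ kills the linear term, leaving a quadratic remainder of size $O(\ell_Q^{2}/\dist(z,Q)^{2})$. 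Summation over cells yields a total error of order $\log|z|$ provided $\beta$ is large enough relative to $\alpha$; the fractional defects left over by rounding $\mu(Q)$ to integers can be redistributed globally so as to contribute at most $O(\log|z|)$ more.

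It remains to locate the exceptional set. The pointwise comparison with the potential of $\nu$ fails in a small neighborhood of each $\zeta_j$, where the single atom $\delta_{\zeta_j}$ dominates the logarithmic sum. Around each $\zeta_j$ I would therefore choose an excluded disk $B(\zeta_j,t_j)$ with $t_j=|\zeta_j|^{-C_\alpha}\ell_{Q_j}$, and since $\#\{\zeta_j:|\zeta_j|<R\}=O(R^{\rho+\varepsilon})$, the sum $\sum_{|\zeta_j|>R}t_j$ satisfies the required $o(R^{\rho-\alpha})$ bound for a suitable tuning of $\beta$ and $C_\alpha$. The main obstacle is the simultaneous balancing of three parameters: the mesh must be fine enough that the quadratic Taylor remainder sums to $O(\log|z|)$, yet coarse enough that the number of exceptional disks in $B(0,R)$ keeps $\sum t_j$ within $o(R^{\rho-\alpha})$ for every admissible $\alpha\ge\rho$, while remaining compatible with the moment-matching accuracy. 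Making a single mesh (and a single family $F$) work uniformly across all $\alpha$, with the constant $C_\alpha$ allowed to depend on $\alpha$, is the heart of the argument and is what distinguishes Yulmukhametov's theorem from the earlier approximation theorems of Azarin and Hayman.
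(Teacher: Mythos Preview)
The paper does not contain a proof of this theorem. Theorem~\ref{yu} is quoted verbatim as ``the following approximation result of Yulmukhametov in \cite{Yul}'' and is then used as a black box in the proofs of Theorems~\ref{th2}(a) and~\ref{th4}(a). There is therefore nothing in the paper to compare your proposal against.

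That said, your sketch is a fair high-level outline of Yulmukhametov's atomization argument: Riesz decomposition, partition into small cells, replacement of $\mu\!\restriction_Q$ by an integer-mass atom with matched first moment, Taylor-remainder summation, and exceptional disks around the atoms. A couple of points where the sketch would need tightening if you actually wanted to carry it out: the rounding defects $\mu(Q)-n_Q$ cannot simply be ``redistributed globally'' without a concrete mechanism (Yulmukhametov handles this by a careful hierarchical sweeping of fractional mass between adjacent cells so that the accumulated error stays logarithmic); and the claim that a \emph{single} choice of $F$ works for all $\alpha\ge\rho$ simultaneously, with only $C_\alpha$ and the exceptional set $E(f,\alpha)$ depending on $\alpha$, requires the cell sizes to shrink like a fixed negative power of $|z|$ while the exceptional radii $t_j$ are allowed to depend on $\alpha$---you gesture at this in the last paragraph but do not pin down how the same $F$ serves every $\alpha$. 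These are exactly the technical cruxes of \cite{Yul}, so for the purposes of the present paper the correct move is simply to cite the result, as the authors do.
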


Without
loss of generality, we can assume that such a function $F$ has simple zeros and $F(0)\neq0$. 

We apply this theorem to the function $f_1(z)=f(z)+L\log|z|$ with $\rho=2$ and $\alpha=4$ and denote $E=E(f_1,4)$. 

It remains to verify that $F$ satisfies the assumptions of Lemma~\ref{le1}.

Denote by $n$ the counting function of the zeros of the function $F$, $n(t)=\card(B(0,t)\cap\mathcal Z_F)$. 
Set $U(r) =\partial B(0,r)$. For sufficiently large $r$ there exist $r_1 \in (r-1/r, r)$ and $r_2 \in (r, r+1/r)$ such that the circles $U(r_1)$ and $U(r_2)$ do not intersect $E$. 
Therefore, $|\log |F(z)| - f(z)| \le c \log|z|$ for $z\in U(r_1)\cup U(r_2)$.
Furthermore, by construction, $| h(r e^{i \varphi}) - h(r_j  e^{i \varphi}) | \le  c$ for $j=1,2$ and some constant $c$, and, hence,
 $|f_1(r e^{i\varphi}) - f_1(r_j  e^{i \varphi})|\le  c$, $0\le\varphi<2\pi$. Applying the Jensen formula, we obtain that
\begin{multline*}
2\pi \int\limits_0^r \frac{n(t)}{t} \, dt = \int\limits_0^{2\pi} \log|F(re^{i \varphi})| \,d \varphi - \log|F(0)|\\=  \left( \pi r^2 - \frac{8r^2}{\log r} \right) |\sin (\theta(r))| + \frac{8\pi r^2}{\log r} + O( \log r),\qquad r\to\infty.
\end{multline*}

Therefore, for $\varepsilon>0$ we have 
$$
2\pi\int_r^{r(1+\varepsilon)}\frac{n(t)}{t}\,dt=(2\varepsilon+\varepsilon^2)\pi r^2|\sin(\theta(r))|+O\biggl{(}\frac{r^2}{\log r}\biggr{)}, \quad r\to\infty,
$$
and, hence,
$$n(r)=\bigl(|\sin(\theta(r))|+o(1)\bigr)r^2,\qquad r\to\infty.
$$
Thus,
\begin{equation}
\limsup\limits_{r \rightarrow \infty} \frac{n(r)}{\pi r^2} = \frac{1}{\pi},
\end{equation}
and the upper density of the zero set of $F$ is equal to $1/\pi$.

Given an entire function $T$, define $M_T(r) = \max\limits_{\varphi\in[0,2\pi]} |T(r e^{i \varphi})|$. By the maximum modulus principle, $M_T$ is an increasing function. For sufficiently large $r$ choose $r_1$ such that the circle $U(r_1)$ does not intersect $E$ and 
$r<r_1<r+\frac{1}{r}$. For some $\varphi_0\in[0,2\pi]$ we have
\begin{multline*}
\log(M_F(r_1)) =  \log|F(r_1 e^{i \varphi_0})|  \le  f(r_1 e^{i \varphi_0}) + c \log(r)\\ \le  \frac{\pi r_1^2}{2} +  c \log(r)  \le   \frac{ \pi r^2}{2} + 
c \log (r)+2\pi.
\end{multline*}

Therefore,
\begin{equation*}
M_F(r) \le  M_F(r_1) \le  e^{(\pi r^2/2) + 2\pi} r^{c},
\end{equation*}
and $z^n F \in \mathcal{F}_0$ if $n< - c - 1$.

Let $G$ be an entire function, $G(0)=1$. Since $\mathbb{C}\setminus(\ell_1\cup\ell_2)$ is a union of relatively compact components, 
by the maximum principle we can find a sequence of points $\zeta_k\in\ell_1\cup\ell_2$, $k\ge 1$, with $|\zeta_k|\to\infty$ 
as $k\to\infty$, such that 
$|G(\zeta_k)|\ge  1$. 
Furthermore, $g(\zeta_k)=1$ and $h(z)=\frac{\pi}{2}|z|^2+O(1)$, $z\in B(\zeta_k,1/|\zeta_k|)$, $k\to\infty$. Hence, $\log|F(z)|\ge \frac{\pi}{2}|z|^2-O(\log|z|)$, $z\in B(\zeta_k,1/|\zeta_k|)\setminus E$, $k\to\infty$. 
Applying the mean value inequality to the analytic function $G^2$ on the set 
$\Omega_k=\bigcup_{0<s<1/|\zeta_k|,\,\partial B(\zeta_k,s)\cap E=\emptyset}\partial B(\zeta_k,s)$, 
for some $c>0$, $c_1\in\mathbb N$ we obtain that 
$$
\int_{\Omega_k}|F(s)G(s)|^2e^{-\pi|s|^2}dm_2(s)\ge c|\zeta_k|^{-2c_1}.
$$
Thus, $z^{c_1}F(z)G(z)\not\in\mathcal{F}_0$. Applying Lemma~\ref{le1} we complete the proof of part (a). 

Part (b). Let $\{k_\lambda\}_{\lambda\in\Lambda}$ be a complete system in $\mathcal{F}$, and let $\mathcal{D}_{+}(\Lambda) <C/\pi$. Denote the elements of $\Lambda$ by $a_1, a_2, \ldots$ in such a way that $|a_1|\le |a_2|\le\ldots$\,. 
%Without loss of generation we may assume that $|a_j|$ are pairwise different. 
Then
\begin{equation}
|a_n| \ge  \sqrt{n/C},\qquad n\ge n_0.
\end{equation}
Set $\Lambda_1 = \Lambda \cup i \Lambda$. Note that $\mathcal{G}_{\Lambda_1}$ is also complete. We are going to show that
$\mathcal{G}_{\Lambda_1}$ cannot be complete if $C<1/3$. 

Since the series $\sum \frac{1}{|a_j|^3}$ converges, the following Weierstrass canonical product is an entire function vanishing on $\Lambda_1$:
\begin{equation*}
F(z) = \prod\limits_{j=1}^\infty \Bigl(1 - \frac{z}{a_j} \Bigr) e^{\frac{z}{a_j} + \frac{z^2}{2a_j^2}}  \Bigl(1 - \frac{z}{ia_j} \Bigr) e^{\frac{z}{ia_j} + \frac{z^2}{2(ia_j)^2}}.
\end{equation*}

We have 
\begin{equation}
\label{ots2}
\log|F(z)| = \sum\limits_{j=1}^\infty \Bigl( \log \Big|1 - \frac{z}{a_j}  \Big| +  \log \Big| 1 - \frac{z}{ia_j}  \Big| + \Re \Bigl(\frac{z}{a_j}+ \frac{z}{ia_j} \Bigr) \Bigr).
\end{equation}

Denote
\begin{equation*}
f(z) = \log\big|1 - z  \big| + \log\big| 1 - iz \big| +\Re \left(z+ iz \right).
\end{equation*}

We have $\log|F(z)|=\sum_{j=1}^\infty f(-iz\slash a_j)$.

Note that $f$ is a subharmonic function. Set
\begin{equation*}
M_f(r) = \max_{\varphi\in[0,2\pi]} f(re^{i\varphi}),\qquad r >0.
\end{equation*}

By the maximum modulus principle, the function $M_f$ is nondecreasing. Therefore,
\begin{equation}
\log|F(z)| \le  \sum\limits_{n=1}^{\infty} M_f\bigl( \sqrt{C/n} |z|\bigr)+O(|z|),\qquad |z|\to\infty.
\label{eq1}
\end{equation}

Denote $w=e^{i\pi\slash4}$. Then 
$$
f(\overline{w}z)=\log|(1-\overline{w}z)(1-wz)e^{\overline{w}z+wz}|=\log|(1-\sqrt{2}z+z^2)e^{\sqrt{2}z}|.
$$
The Taylor series of the entire function 
$$
g(z)=(1-\sqrt{2}z+z^2)e^{\sqrt{2}z}=1+\sum_{k\ge  3}\frac{2^{(k-2)/2}}{k(k-3)!}z^k
$$
has non-negative coefficients. Therefore, for fixed $r$, $|g(re^{i\varphi})|$ attains its maximum at $\varphi=0$.
Thus,
\begin{equation}
M_f(r)=\log(1-\sqrt{2}r+r^2)+\sqrt{2}r.
\label{eq2}
\end{equation}

Now, \eqref{eq1} and \eqref{eq2} give that 
\begin{align*}
\frac{\log M_F(R)}{R^2} \le  \frac{1}{R^2} \sum\limits_{n=1}^{\infty} \biggl[\log\Bigl(1 - \sqrt{\frac{2C}{n}}R + \frac{CR^2}{n} \Bigr)+ \sqrt{\frac{2C}{n}}R\biggr]+O(R^{-1})   \intertext{(by the monotonicity of $M_f$ on $\mathbb R_+$)}
\le  \frac{1}{R^2} \int\limits_1^\infty  \biggl[ \log\Bigl(1 -  \sqrt{\frac{2C}{t}}R + \frac{CR^2}{t}\Bigr)+ \sqrt{\frac{2C}{t}}R \biggr]  dt+O(R^{-1}),
\, R\to\infty.
\end{align*}
Using the substitution $s=R\sqrt{\frac{C}{t}}$, we obtain that 
\begin{multline*}
\frac{\log M_F(R)}{R^2}\le  2C\int_0^{R\sqrt{C}}\Bigl(\log(1-\sqrt{2}s+s^2)+
\sqrt{2}s\Bigr)\frac{ds}{s^3}+O(R^{-1})\\<
2C\int_0^\infty\Bigl(\log(1-\sqrt{2}s+s^2)+\sqrt{2}s\Bigr)\frac{ds}{s^3}+O(R^{-1}),\qquad R\to\infty.
\end{multline*}
Integrating by parts, we get 
\begin{multline*}
2\int_0^\infty\Bigl(\log(1-\sqrt{2}s+s^2)+\sqrt{2}s\Bigr)\frac{ds}{s^3}\\=
-\frac{1}{s^2}\Bigl(\log(1-\sqrt{2}s+s^2)+\sqrt{2}s\Bigr)\bigg{|}_0^\infty+\int_0^\infty\biggl{(}\frac{-\sqrt{2}+2s}{1-\sqrt{2}s+s^2}+\sqrt{2}\biggr{)}\frac{ds}{s^2}
\\=\int_0^\infty\frac{\sqrt{2}}{1-\sqrt{2}s+s^2}ds=2\int_0^\infty\frac{dt}{2-2t+t^2}=2\int_{-1}^\infty\frac{dx}{x^2+1}=\frac{3\pi}{2}.
\end{multline*}

Therefore, $F\in \mathcal{F}$ if $C<1/3$. Thus, if $\mathcal{G}_\Lambda$ is complete, then $\mathcal{D}_{+}(\Lambda) \ge 1/(3\pi)$.
\end{proof}

\begin{proof}[Proof of Theorem \ref{th4}] Part (a). Set 
\begin{equation}
\tau(t)=t\log\frac{e}{t}. 
\label{star1}
\end{equation}
Then $\tau(1)=1$, $\tau'(t)=\log\frac{1}{t}$ is strictly negative and $\tau(t)<1$ for $t\in (1,\infty)$. 
Let $h$ be a radially symmetric subharmonic function in the complex plane such that $h(0)=0$. Denote $\Delta h(re^{i\varphi})=2\pi r \,d\nu(r)\otimes d\varphi$.
Then $\Delta h(B(0,r))=4\pi^2\int_0^rs\,d\nu(s)$. Furthermore, Green's formula says that
$$
h(r)=\frac{1}{2\pi}\int_{B(0,r)}\log\frac{r}{s}\, \Delta h(se^{i\varphi})=2\pi\int_0^rs\log\frac{r}{s}\,d\nu(s).
$$
Choose $a>1$ such that $\tau(a^2)=\beta$ and set $\delta=a/\sqrt{\beta}>1$. 
Next, given a rapidly growing sequence of real numbers $\{R_k\}_{k\ge 1}$, consider the positive measure
$$d\nu(r)=\beta dr+\sum_{k\ge 1}\biggl{(}\frac{1}{2}(a^2-\beta)R_k\delta_{R_k}-\beta\chi_{[R_k,\delta R_k]}dr\biggr{)}.$$
and the corresponding subharmonic function $h$ vanishing at the origin such that $\Delta h(re^{i\varphi})=2\pi r \,d\nu(r)\otimes d\varphi$.

Let us verify that
\begin{align*}
{\rm(i)}&\quad h(aR_k)=\frac{\pi}{2}a^2R^2_k+O(\log R_k), \qquad k\to\infty,\\
%\label{l1}
{\rm(ii)}&\quad h(x)\le  \frac{\pi}{2}x^2+O(\log x),\qquad x\to\infty,\\
{\rm(iii)}&\quad \liminf_{R\rightarrow\infty}\frac{(2\pi)^{-1}\Delta h(B(0,R))}{\pi R^2}=\beta,\\
{\rm(iv)}&\quad \limsup_{R\rightarrow\infty}\frac{(2\pi)^{-1}\Delta h(B(0,R))}{\pi R^2}=a^2.
\end{align*}

To prove (i), we use that
\begin{align*}
h(aR_k)=&2\pi \int_0^{aR_k}s\log\frac{aR_k}{s}\,d\nu(s)%\label{z3}
\\
=&2\pi\int_{0}^{R_k}\beta s \log\frac{aR_k}{s}\,ds+\pi R^2_k(a^2-\beta)\log a\notag\\
+&\sum_{1\le j<k}\Bigl( \pi R^2_j(a^2-\beta)\log \frac{aR_k}{R_j}-2\pi\beta\int_{R_j}^{\delta R_j} s \log\frac{aR_k}{s}\,ds\Bigr)\notag\\
=&2\pi\int_{0}^{R_k}\!\beta s \log\frac{aR_k}{s}\,ds+\pi R^2_k(a^2-\beta)\log a\notag\\
+&\sum_{1\le j<k}\Bigl( \pi R^2_j(a^2-\beta)\log \frac{a}{R_j}-2\pi\beta\int_{R_j}^{\delta R_j} s \log\frac{a}{s}\,ds\Bigr)\notag\\
=&2\pi\int_{0}^{R_k}\!\beta s \log\frac{aR_k}{s}\,ds+\pi R^2_k(a^2-\beta)\log a+O(\log R_k)\notag\\
=&\frac{\pi}{2}a^2R^2_k+O(\log R_k), \qquad k\rightarrow\infty, \notag
\end{align*}
if $R_k\gg R_{k-1}$, $k\ge 2$.

Since
$$
\frac{\frac1{2\pi}\Delta h(B(0,t))}{\pi t^2}=\frac{2}{t^2 }\int_0^{t} s\,d\nu(s),
$$
to prove (iii), we need to verify that 
\begin{equation}
\liminf_{t\to\infty}\frac{2}{t^2 }\int_0^{t} s\,d\nu(s)=\beta.
\label{star2}
\end{equation}

Set 
$$
H(t)=2\int_0^{t} s\,d\nu(s)-\beta t^2.
$$
The function $H$ is continuous on $\mathbb R_+\setminus\{R_k\}_{k\ge 1}$, $H=0$ outside $\cup_{k\ge 1}[R_k,\delta R_k)$, 
$H(R_k+0)>0$, $H'(\delta R_k-0)<0$,  
and $H''=-2\beta$ on $\cup_{k\ge 1}(R_k,\delta R_k)$. Therefore, 
\begin{equation}
H(t)\ge 0,\qquad t\ge 0.
\label{star3}
\end{equation}
This gives \eqref{star2} and, hence, (iii).

To prove (ii), we note that 
\begin{align*}
h'(r)&=\frac{2\pi}{r}\int_0^{r}s\,d\nu(s),\\
h''(r)&=2\pi\frac{d\nu}{ds}(r)-\frac{2\pi}{r^2}\int_0^{r}s\,d\nu(s),
\end{align*}
and, hence, by \eqref{star3}, $h''\le \pi\beta$ on the intervals $(R_k,R_{k+1})$, $k\ge 1$. 
Thus, the function $F(t)=\frac{\pi t^2}2-h(t)$ is convex on the intervals $(R_k,R_{k+1})$, $k\ge 1$, 
and 
\begin{equation}
F''\ge \pi(1-\beta)>0
\label{z1}
\end{equation}
there. 

By (i), 
\begin{equation}
F(aR_k)=O(\log R_k), \qquad k\to\infty.
\label{z11}
\end{equation}

Furthermore,
\begin{align*}
F'(aR_k)=&\pi a R_k-\frac{2\pi}{a R_k}\int_0^{aR_k}s\,d\nu(s)%\label{z2}
\\
=&\pi a R_k-\frac{2\pi}{a R_k}\int_{R_{k-1}+0}^{aR_k}s\,d\nu(s)%\notag\\&\qquad\qquad
+o(1)
%-\frac{2\pi}{a R_k}\frac{a^2-\beta}{2}R^2_k+O(\sqrt{\log R_k})%\notag
\\
=&\pi a R_k-\frac{2\pi}{a R_k}\int_{0}^{R_k}\beta s\,ds%\notag\\&\qquad\qquad
-\frac{2\pi}{ aR_k}\frac{a^2-\beta}{2}R^2_k+o(1)
%O(\sqrt{\log R_k})%\notag
\\
=&o(1),\qquad k\to\infty.
%O(\sqrt{\log R_k}),\qquad k\to\infty.%\notag
\end{align*}
Therefore, taking into account \eqref{z1} and \eqref{z11}, we conclude that  
$$
F(t)\ge -O(\log R_k),\qquad R_k\le t\le R_{k+1},\,k\to\infty,
$$ 
that gives (ii).

To prove (iv) we use that
$$
\frac{\frac1{2\pi}\Delta h(B(0,t))}{\pi t^2}=\beta+\frac{H(t)}{t^2 },
$$
and $H=0$ on $\mathbb R_+\setminus \cup_{k\ge 1}[R_k,\delta R_k)$, 
$H>0$ on $\cup_{k\ge 1}[R_k,\delta R_k)$. Hence, 
$$
\limsup_{t\to\infty}\frac{\frac1{2\pi}\Delta h(B(0,t))}{\pi t^2}=
\limsup_{t\in\cup_{k\ge 1}[R_k,\delta R_k),\, t\to\infty}\frac{2}{t^2 }\int_0^{t} s\,d\nu(s).
$$
Since $\int_0^{t} s\,d\nu(s)$ is constant on every interval $(R_k,\delta R_k)$, $k\ge 1$, we obtain that 
$$
\limsup_{t\to\infty}\frac{\frac1{2\pi}\Delta h(B(0,t))}{\pi t^2}=
\limsup_{k\to\infty}\frac{2}{R_k^2 }\int_0^{R_k+0} s\,d\nu(s)=a^2.
$$

It remains to apply to $h$ the approximation result by Yulmukhametov (Theorem~\ref{yu}) to obtain an entire function $F$ 
with simple zeros such that 
\begin{align*}
{\rm(i')}&\quad \log|F(b_k)|=\frac{\pi}{2}b^2_k+O(\log b_k), \qquad k\to\infty,\\
{\rm(ii')}&\quad \log|F(z)|\le  \frac{\pi}{2}|z|^2+O(\log |z|),\qquad |z|\to\infty,
\end{align*}
for some $b_k\in(aR_k,aR_k+1/R_k)$. 

By the Jensen formula, for every $\varepsilon>0$, 
\begin{multline*}
\card\bigl(\mathcal Z_F\cap B(0,t)\bigr)-o(t^2)\le \frac1{2\pi}\Delta h\bigl(B(0,(1+\varepsilon)t)\bigr)\\ \le 
\card\bigl(\mathcal Z_F\cap B(0,(1+\varepsilon)^2t)\bigr)+o(t^2),\qquad t\to\infty.
\end{multline*}
Therefore, we have 
\begin{align*}
{\rm(iii')}&\quad \liminf_{R\rightarrow\infty}\frac{\card\bigl(\mathcal Z_F\cap B(0,R)\bigr)}{\pi R^2}=\beta,\\
{\rm(iv')}&\quad \limsup_{R\rightarrow\infty}\frac{\card\bigl(\mathcal Z_F\cap B(0,R)\bigr)}{\pi R^2}=a^2.
\end{align*}

Using Lemma~\ref{le1} as in the proof of 
Theorem~\ref{th2}~(a), we complete the proof of part (a).

Part (b). Let $\{k_\lambda\}_{\lambda\in\Lambda}$ be a minimal system in $\mathcal{F}$ such that $\mathcal{D}_{+}(\Lambda)>1$. 
%If $\gamma=\mathcal{D}_{+}(\Lambda)=\mathcal{D}_{-}(\Lambda)$, then by
%the Jensen formula,
%$$\int_0^r\frac{(\gamma+o(1))\pi t^2}{t}\,dt\le  \frac{\pi r^2}{2}+\log r,\qquad r\rightarrow\infty,$$
%and, hence, $\gamma\le  1$.
%Now, 
Suppose that $\tau(\mathcal{D}_{+}(\Lambda))<\mathcal{D}_{-}(\Lambda)$ and choose $\alpha$ and $\beta$ such that 
$$
\mathcal{D}_{+}(\Lambda)>\alpha>1
$$
and
$$
\mathcal{D}_{-}(\Lambda)>\beta>\tau(\alpha),
$$
where $\tau$ is defined by \eqref{star1}. 
Then 
$$
\frac{\card(\Lambda\cap B(0,R))}{\pi R^2}\ge  \beta, \qquad R>R_0.
$$
Furthermore, let $\{R_k\}_{k\ge 1}$ be a %increasing 
sequence of positive numbers such that $\lim_{k\to\infty}R_k=\infty$ and
$$
\frac{\card(\Lambda\cap B(0,R_k))}{\pi R_k^2}\ge  \alpha, \qquad k\ge 1.
$$
Denote by $n=n_\Lambda$ the counting function of the sequence $\Lambda$. By the Jensen inequality for every $\gamma> 1$ we have
$$
\int_0^{\gamma R_k}\frac{n(t)}{t}\,dt\le  \frac{\pi}{2}(\gamma R_k)^2+\log R_k+O(1),\qquad k\to \infty.
$$
Furthermore,
\begin{multline*}
\int_0^{\gamma R_k}\frac{n(t)}{t}\,dt\ge  \int_{R_k}^{\gamma R_k}\frac{n (t)}{t}\,dt+\int_{R_0}^{R_k}\frac{n (t)}{t}\,dt+O(1)\\ \ge
\int_{R_k}^{\gamma R_k}\pi\alpha R^2_k\,\frac{dt}{t}+\int_{R_0}^{R_k}\frac{\pi\beta t^2}{t}\,dt+O(1)\\=\pi\alpha R^2_k\log\gamma +\frac{\pi\beta}{2}R^2_k+O(1),\qquad k\to\infty.
\end{multline*}
Hence, $\alpha\log \gamma^2+\beta\le  \gamma^2$. Choose $\gamma=\sqrt{\alpha}$. 
Since $\beta>\tau(\alpha)$, we obtain 
$$
\alpha\log \alpha+\alpha\log\frac{e}{\alpha}<\alpha.
$$
This contradiction shows that %$\tau(\alpha)\ge \beta$, and, by the continuity of $\tau$, 
$\tau(\mathcal{D}_{+}(\Lambda))\ge \mathcal{D}_{-}(\Lambda)$.
\end{proof}

\end{document}